\newtheorem{thm}{Theorem}[section]
\newtheorem{theorem}[thm]{Theorem}
\newtheorem{lemma}[thm]{Lemma}
\newtheorem{proposition}[thm]{Proposition}
\newcommand{\beq}{\begin{equation}}
\newcommand{\eeq}{\end{equation}}
\newcommand{\beqa}{\begin{eqnarray}}
\newcommand{\eeqa}{\end{eqnarray}}
\newcommand{\beqas}{\begin{eqnarray*}}
\newcommand{\eeqas}{\end{eqnarray*}}
\newcommand{\bi}{\begin{itemize}}
\newcommand{\ei}{\end{itemize}}
\newcommand{\R}{\mathbb{R}}
\newcommand{\lam}{{\lambda}}
\newcommand{\inner}[2]{\langle #1,#2\rangle}
\newcommand{\argmin}{\mathrm{argmin}\,}
\newcommand{\dom}{\mathrm{dom}\,}
\newcommand{\bConv}[1]{\overline{\mbox{\rm Conv}}\,(\R^{#1})}
\newcommand{\tx}{\tilde x}
\newcommand{\ty}{\tilde y}
\newcommand{\mConv}[1]{\overline{\mbox{\rm Conv}}_\mu\,(\R^{#1})}
\pgfplotsset{compat=1.18}
\def\tsc#1{\csdef{#1}{\textsc{\lowercase{#1}}\xspace}}
\begin{document}
\let\WriteBookmarks\relax
\def\floatpagepagefraction{1}
\def\textpagefraction{.001}

\shorttitle{}    


\title [mode = title]{Complexity and numerical experiments of a new adaptive generic proximal bundle method}  



%

\author[1]{Vincent Guigues}
\ead{vincent.guigues@fgv.br}






\affiliation[1]{
organization={School of Applied Mathematics, FGV},
addressline={Praia de Botafogo},
city={Rio de Janeiro},
country={Brazil}
}

\author[2]{Renato D.C. Monteiro}





\affiliation[2]{
organization={Georgia Institute of Technology},
addressline={Atlanta, Georgia 30332-0205},
country={USA}
}

\author[3]{Benoit Tran}






\affiliation[3]{organization={School of Applied Mathematics, FGV},
addressline={Praia de Botafogo},
city={Rio de Janeiro},
country={Brazil}
}

\author[4]{Adriana W. Henarejos}






\affiliation[4]{organization={School of Applied Mathematics, FGV},
addressline={Praia de Botafogo},
city={Rio de Janeiro},
country={Brazil}
}

\cortext[1]{Corresponding author.}



\begin{abstract}
This paper 
develops an adaptive generic proximal bundle method, shows its complexity, and presents numerical experiments comparing this method with other competing methods on a set of optimization problems. \\
\end{abstract}




\begin{keywords}
hybrid composite optimization \sep iteration complexity \sep adaptive proximal bundle method 
\end{keywords}

\maketitle


\section{Introduction}

Existing proximal bundle methods for hybrid composite optimization problems typically rely on fixed proximal parameters or require generic solvers, which may limit their practical efficiency. This paper studies adaptive proximal bundle methods for solving convex hybrid composite optimization (HCO) problems. The novelty of this work lies in the introduction of adaptive mechanisms within Generic Proximal Bundle (GPB) variants based on low-memory one-cut and two-cuts models, together with a solver-free implementation. The proposed approach preserves the structure of these classical models while dynamically adjusting the proximal parameter during the iterations, leading to efficient implementations. Beyond theoretical guarantees, we demonstrate  the practical efficiency of the Adaptative GPB through extensive numerical experiments. We now introduce the notation, assumptions, and basic definitions that will be used throughout the paper.

	Let $\R^n$ denote the standard $n$-dimensional Euclidean space equipped with  inner product and norm denoted by $\left\langle \cdot,\cdot\right\rangle $
	and $\|\cdot\|$, respectively. 
	Let $\Psi: \R^n\rightarrow (-\infty,+\infty]$ be given. Let $\dom \Psi:=\{x \in \R^n: \Psi (x) <\infty\}$ denote the effective domain of $\Psi$.
 We say that
	$\Psi$ is proper if $\dom \Psi \ne \emptyset$.
	A proper function $\Psi: \R^n\rightarrow (-\infty,+\infty]$ is $\mu$-convex for some $\mu \ge 0$ if
	$$
	\Psi(\alpha z+(1-\alpha) z')\leq \alpha \Psi(z)+(1-\alpha)\Psi(z') - \frac{\alpha(1-\alpha) \mu}{2}\|z-z'\|^2
	$$
	for every $z, z' \in \dom \Psi$ and $\alpha \in [0,1]$.
 The set of all proper lower semicontinuous $\mu$-convex functions is denoted by $\mConv{n}$.
 When $\mu=0$, we simply denote
    $\mConv{n}$ by $\bConv{n}$.
	The subdifferential of $\Psi$ at $z \in \dom \Psi$ is denoted by $\partial \Psi (z)$.

This paper considers convex hybrid composite optimization (HCO) problems of form
\begin{equation}\label{eq:ProbIntro2}
	\phi_{*}:=\min \left\{\phi(x):=f(x)+h(x): x \in \R^n\right\},
	\end{equation}
where the following conditions hold for some triple
$(\overline L_f,\allowbreak \overline M_f,\allowbreak \mu)\in \mathbb{R}_+^3$ for functions $f$ and $h$:
\begin{itemize}
    \item[(A1)]
    $f \in \bConv{n}$ and $h \in \mConv{n}$ with  are such that
    $\dom h \subset \dom f$, and a subgradient oracle,
     i.e.,
    a function $f':\dom h \to \R^n$
    satisfying $f'(x) \in \partial f(x)$ for every $x \in \dom h$, is available;
    \item[(A2)]
    the set of optimal solutions $X^*$ of
    problem \eqref{eq:ProbIntro2} is \allowbreak nonempty;
    \item[(A3)]
    for every $u,v \in \dom h$,
    \[
    \|f'(u)-f'(v)\| \le 2{\overline M}_f + {\overline L}_f \|u-v\|.
    \]
\end{itemize}

 This work is concerned with universal methods, i.e., \allowbreak parameter-free methods for solving \eqref{eq:ProbIntro2} that do not require knowledge of any of parameters associated with the instance $(f,h)$, such as
  the parameter pair $(\overline M_f,\overline L_f)$ or the intrinsic convexity parameter $\mu$ of $h$.
The first universal methods for solving \eqref{eq:ProbIntro2} under the condition that $\nabla f$ is Hölder continuous 
have been presented in
\cite{nesterov2015universal}
and 
\cite{lan2015bundle}.
Additional universal methods
for solving \eqref{eq:ProbIntro2} have been studied in \cite{liang2021average,liang2023average,mishchenko2020adaptive,zhou2024adabb} under the condition that $f$ is smooth,
and  in \cite{li2023simple,liang2020proximal,liang2023unified} for the case where $f$ is either smooth or nonsmooth.

In this paper, we introduce Adaptive GPB, an extension of the Generic Proximal Bundle (GPB) method from \cite{liang2023unified}. While GPB is an inexact proximal point method with a fixed stepsize $\lambda$, Adaptive GPB dynamically updates $\lambda$ along the iterations based on the observed behavior of the algorithm.

The proposed adaptive mechanism is designed to reduce sensitivity to the initial stepsize and to handle different convexity regimes more robustly. This is achieved by organizing the algorithm into cycles, each associated with a proximal subroutine whose outcome is evaluated through an explicit contraction test. According to this test, a cycle is declared successful or unsuccessful, and the stepsize is increased or decreased accordingly, leading to a clear separation between local progress and global control.

The present work builds upon and extends the framework developed in \cite{liang2023unified}. While in \cite{liang2023unified} the contraction properties required for complexity analysis are assumed and guaranteed theoretically for a fixed stepsize, Adaptive GPB explicitly measures and exploits contraction through the ratio $\alpha_i$ computed at the end of each cycle. This explicit contraction criterion plays a central role in the adaptive mechanism and in the resulting complexity analysis. Moreover, while \cite{liang2023unified} considers OneCut, TwoCuts, and MultiCuts bundle update strategies, the present work focuses on the simpler and computationally efficient OneCut and TwoCuts variants, which are particularly well suited for an adaptive scheme.

A closely related contribution is \cite{guigues2024universal}, which introduces universal methods for hybrid composite optimization, including the universal proximal bundle method (U-PB). While both approaches leverage the functional framework for strongly convex optimization (FSCO), the adaptive mechanism of Adaptive GPB differs substantially. In U-PB, stepsize reduction acts as a safeguard after unsuccessful null steps and relies on a fixed bound on internal iterations per cycle. In contrast, Adaptive GPB regulates the stepsize directly at the cycle level via an explicit contraction test, allowing both increases and decreases without imposing a fixed iteration ceiling. This cycle-based structure requires additional arguments beyond those in \cite{guigues2024universal}.

We establish the complexity of Adaptive GPB and report numerical experiments comparing it with two classical bundle methods from \cite{LemarechalNewVariantsBundle1995,kiwielProximityControlBundle1990}, GPB (one-cut, two-cut and multicut versions) \cite{liang2023unified} and U-PB \cite{guigues2024universal}. The experiments are conducted on a collection of nonsmooth convex optimization problems taken from \cite{skajaaLimitedMemoryBFGS,SagastizabalCompositeProximalBundle2013,deoliveiraDoublyStabilizedBundle2016}.

The remainder of the paper is organized as follows. Section~2 describes the Adaptive GPB method in detail, including its OneCut and TwoCuts variants, as well as the main components of the subroutine and the main loop. Section~3 presents the complexity analysis of the algorithm, establishing bounds on the number of cycles and internal iterations required to reach an approximate solution. Section~4 reports the numerical experiments. Finally, Section~5 concludes the paper and discusses possible directions for future research.

\section{Adaptive GPB}\label{sec:adapgpb}

In this section, we present the proposed adaptive Generic Proximal Bundle (GPB) framework. We describe the adaptive update of the proximal parameter and introduce the one-cut and two-cuts variants considered in the analysis.

To solve problem \eqref{eq:ProbIntro2}, an iteration of GPB solves a proximal bundle subproblem of form
\begin{equation}\label{eq:x-pre}
    x = \underset{u\in \R^n}\argmin \left\{ \Gamma (u) + \frac{1}{2\lambda} \|u-x^c\|^2 \right\},
\end{equation}
where $\Gamma$ is a model for $f+h$
satisfying $\Gamma \in \mConv{n}$ and $\Gamma \leq \phi$, $\lambda$ is the step size, and
$x^c$ is the prox-center.
In what follows, we will denote by $\ell_f(\cdot;x)$ a linearization of the convex function $f$ at the point $x$, that is,
\begin{equation}\label{def:ell}
\ell_f(\cdot;x) := f(x)+\inner{f'(x)}{\cdot-x} \quad \forall x\in \dom h.
\end{equation}
Our adaptive variant of GPB has three ingredients described below:
\begin{itemize}
\item a model update blackbox, referred to as a bundle update (BU), that computes a new model 
$\Gamma^+$ for $f+h$ on the basis of a previous
model $\Gamma$, of a prox-center $x^c$, of a step size
$\lambda$, of a parameter $\tau \in (0,1)$, of an input $x$ satisfying \eqref{eq:x-pre},
and of a linearization $\ell_f(\cdot,x)$ of $f$ at $x$;
\item a subroutine that iterates performing inexact proximal steps (inexact in the sense that subproblems of form \eqref{eq:x-pre} are solved along iterations, whereas an exact proximal step would use $f+h$ instead of a model $\Gamma$ for $f+h$) until either we reach an $\bar \varepsilon/2$-approximate
proximal step in a sense quantified below
or we exit with failure at the end of a so-called bad cycle, without being able to compute such $\bar \varepsilon/2$-approximate proximal step;
\item a main loop that calls the subroutine and updates the step size $\lambda$ (either increasing or decreasing this step size) depending on the outputs of the subroutine.
\end{itemize}
We say that 
a sequence (or cycle)
of iterations of the
subroutine
is very good
when we "clearly" (as quantified by
parameters $\kappa_1 \leq 1 \leq \kappa_2$) observed the expected
$\tau$-contraction on the error of the approximate proximal step, for some
$\tau \in (0,1)$. In this case, the main loop will increase
the step size, multiplying the step size by 2.
When the cycle is bad (see a more precise definition below), we divide the
step size by 2.

\subsection{Bundle update blackbox}

We start by describing the bundle update (BU) blackbox.

\vspace{-0.3em}
\noindent\rule{\columnwidth}{0.8pt}
\vspace{-0.9em}

\noindent\textbf{Bundle Update (BU)}

\vspace{-0.7em}
\noindent\rule{\columnwidth}{0.8pt}
\vspace{-0.5em}

\noindent\textbf{Inputs:}
$(\lambda,\tau)\in\mathbb{R}_{++}\times(0,1)$ and
$(x^c,x,\Gamma)\in\mathbb{R}^n\times\mathbb{R}^n\times\mConv{n}$
such that $\Gamma\le\phi$ and \eqref{eq:x-pre} holds.

\vspace{0.1cm}

$\bullet$ Find function $\Gamma^+$ such that
\vspace{-0.9em}

{\small
\begin{equation}\label{def:Gamma}
\Gamma^+ \in \mConv{n},
\tau \bar\Gamma(\cdot)
+(1-\tau)[\ell_f(\cdot;x)+h(\cdot)]
\le
\Gamma^+(\cdot)
\le
\phi(\cdot).
\end{equation}
}
\vspace{-1.0em}

\noindent where $\ell_f(\cdot;\cdot)$ is as in \eqref{def:ell}
and $\bar\Gamma(\cdot)$ is such that
\vspace{-0.9em}

{\small
\begin{equation}\label{def:bar Gamma}
\bar\Gamma \in \mConv{n},
\bar\Gamma(x)=\Gamma(x),
x=
\underset{u\in\mathbb{R}^n}{\argmin}
\left\{
\bar\Gamma(u)
+
\frac{1}{2\lambda}\|u-x^c\|^2
\right\}.
\end{equation}
}
\vspace{-0.8cm}

\noindent\textbf{Output:} $\Gamma^+$.

\vspace{-0.4em}
\noindent\rule{\columnwidth}{0.8pt}

The above update scheme does not completely determine $\Gamma^+$
and	gives conditions suitable for the complexity analysis
	of Adaptive GPB.
A natural choice for the first model $\Gamma_1$ is $\Gamma_1 = h + \ell_{f}(,x_0)$ for a given first (arbitrary) trial point $x_0$.
The new model $\Gamma^{+}$ must be above a convex combination of
two functions, with $\tau \in (0,1)$ being the parameter defining this convex combination.
The first function in this convex combination is
$h$ plus
the last computed linearization $\ell_{f}(\cdot,x)$.
The second function in this convex combination is
another model
$\bar \Gamma$ satisfying some conditions (this model can be as a special case 
the previous model $\Gamma$).
All models provided by the blacbox BU are lower bounding 
convex functions
for the objective
$\phi$.

We now describe two particular BU blackboxes: a first
one where the model $\Gamma$ is made of one
linearization (one affine function) giving rise
to Adaptive GPB OneCut variant
and a second 
one where the model $\Gamma$ is made of two
linearizations (two affine functions) giving rise
to Adaptive GPB TwoCuts variant.\\

\par {\textbf{Adaptive GPB OneCut variant.}}
The bundle size remains constant equal to 1.
This scheme obtains $\Gamma^+$ as
\begin{equation}\label{eq:affine}
\Gamma^+= \Gamma^+_\tau := \tau \Gamma + (1-\tau) [\ell_f(\cdot;x)+h]
\end{equation}
where $x$ is as in \eqref{eq:x-pre} and $\tau\in (0,1)$. Clearly, if
$\Gamma$ is the sum of $h$ and
an affine function underneath $f$,
then so is $\Gamma^+$.\\

\par {\textbf{Adaptive GPB TwoCuts variant.}}
The bundle size remains constant equal to 2.
Assume that
$\Gamma=\max \{A_f, \allowbreak \ell_f(\cdot;x^-) \}+h$ where $A_f$ is an affine function satisfying $A_f\le f$ and $x^-$ is the previous iterate.
This scheme sets the next bundle function $\Gamma^+$ to one similar to
$\Gamma$ but with  $(x^-,A_f)$ replaced by
$(x,A_f^+)$ where
$A_f^+ =  \theta^+ A_f + (1-\theta^+) \ell_f(\cdot;x^-)$ for some
$\theta^+ \in[0,1]$ which does not
depend on $(\overline L_f,\overline M_f,\mu)$.\\

It is easy to check that these two variants of BU satisfy the requirements of this blackbox, see \cite{liang2023unified}
for details. When $h=0$
is the null function, it is also easy to see that with the OneCut variant  we have
analytic formulas for the optimal solution of the subproblems \eqref{eq:x-pre}
while for the TwoCuts variant we can obtain efficiently,
with line search and
iterations given in closed-form, an optimal solution of
\eqref{eq:x-pre}.
An advantage of these two variants is therefore that subproblems \eqref{eq:x-pre} are solved extremely fast, contrary to other bundle methods with different models $\Gamma$ which in general require a solver (quadratic when $h=0$) to solve \eqref{eq:x-pre}. 

More precisely, when $h=0$, for the OneCut variant, $\Gamma(u)$
is of form
$\langle \gamma,u \rangle + b$ and the solution of \eqref{eq:x-pre} is $x=x^c-\lambda \gamma$.

Now consider  the TwoCuts variant. 
We have 
$$
\Gamma = \max\{A_f, \ell_f(\cdot, x_-)\}
=\max_{\theta \in [0,1]} \theta A_f + (1-\theta)  \ell_f(\cdot, x_-),
$$ 
for
$A_f$ affine.
If $h = 0$ or if $h$ is the indicator of a box, then the function 
{\small
\begin{equation}
\label{eq:minmax_auxiliary}
\phi(\theta) = \min_{y \in \mathbb{R}^n} \theta A_f(y) + (1-\theta)  \ell_f(y, x_-)  + h(y) + \frac{1}{2\lambda} \lVert y - x^c \rVert^2,
\end{equation}}
can be computed by a simple closed formula and the minmax problem \eqref{eq:x-pre} can be reformulated as 
\begin{equation}
    \label{eq:bundle_subprob_linsearch}
    \max_{\theta \in [0,1]} \phi(\theta).
\end{equation}

For each $\theta \in [0,1]$, the solution $y^*(\theta)$ of problem \eqref{eq:minmax_auxiliary} can be computed through a closed formula (and its optimal value is $\phi(\theta)$).
The function $\phi$ is concave, from one dimensional variable $\theta \in [0,1]$
to $\mathbb{R}$, smooth, with a gradient which is affine in $y^*(\theta)$.
An optimal solution $\theta^*$ of
\eqref{eq:bundle_subprob_linsearch} can be computed by dichotomy, with corresponding optimal solution 
$y^*(\theta^*)$ of 
problem \eqref{eq:x-pre}.
When $h=0$, $\phi$ is a polynomial of degree 2 which can be maximized analytically
on $[0,1]$. For dichotomy, it is useful to use the derivative $\phi'(\theta)$
of $\phi$ which is given by
$$
\phi'(\theta)=A_f(y^*(\theta))-\ell_f(y^*(\theta), x_-).
$$
The dichotomy is then as follows: if $\phi'(0) \leq 0$ then $\theta^*=0$, if $\phi'(1) \geq 0$ then $\theta^*=1$, otherwise, do the following steps.\\
\par Step 0. Initialize $L=0$, $R=1$, $\theta=(L+R)/2$. Go to Step 1.\\
\par Step 1. If $|\phi'(\theta)| \leq \varepsilon$, then $\theta^*=\theta$ is an approximate optimal solution of \eqref{eq:bundle_subprob_linsearch}, stop. If $\phi'(\theta)>\varepsilon$, then $L=\theta$. If $\phi'(\theta)<-\varepsilon$, then $R=\theta$. Go to Step 2.\\
\par Step 2. $\theta=(L+R)/2$.
Go to Step 1.

\subsection{Adaptive GPB subroutine}
 
 We now give the pseudo-code for Adaptive GPB subroutine. It depends on two parameters $\kappa_1 \leq 1$ and $\kappa_2 \geq 1$.

\noindent\rule[0.5ex]{1\columnwidth}{1pt}

    Adaptive GPB subroutine with $\kappa_1 \leq 1 \leq \kappa_2$.

\noindent\rule[0.5ex]{1\columnwidth}{1pt}
\begin{itemize}
    \item [0.] {\textbf{Inputs:}} $\tau \in (0,1)$, $ \bar \varepsilon>0$, $\lambda>0$, and   
$x^c, \ty_0 \in \dom h $.
\par $\hspace*{1cm}$Set $i=1$.

    \item[1.] If $i=1$ then find $\Gamma_{1}$ such that
    \begin{equation}\label{ineq:require}
        \Gamma_{1}\in \mConv{n}, \quad \ell_f(\cdot;x^c)+h \le \Gamma_{1}\le  \phi;
    \end{equation}
    else, let $\Gamma_{i}$ be the output of the BU blackbox with input
    $(\lam,\tau,x^c)$ and $(x,\Gamma) = (\tx_{i-1},\Gamma_{i-1})$;
       \vspace{-0.2cm}
    \item[2.] Compute 
    \begin{equation}
        \tx_{i} =\underset{u\in \R^n}\argmin
        \left\lbrace \Gamma_{i}(u) +\frac{1}{2\lam}\|u- x^c \|^2 \right\rbrace,  \label{def:xj}
    \end{equation}
        choose $\ty_i \in \{\tx_i,\ty_{i-1}\} $ such that
    \begin{equation}\label{def:txj}
        \phi(\ty_{i}) = \min \left \lbrace\phi(\tx_{i}), \phi(\ty_{i-1}) \right\rbrace
    \end{equation}
    and set  
  {\small  \begin{equation}\label{ineq:hpe1}
        m_{i}=\Gamma_{i}(\tx_{i}) +\frac{1}{2\lam}\|\tx_{i}- x^c \|^2,  t_{i} = \phi(\ty_{i}) - m_{i}, 
    \alpha_i = \frac{t_i-\bar \varepsilon/4}{\tau^{i-1} (t_{1}-\bar \varepsilon/4)}; 
    \end{equation}}
\vspace{-0.8cm}
    \item[3.] If $t_i \le \bar \varepsilon/2$, then 
      \begin{itemize}
      \item
          declare the cycle to be {\textbf{good}} (resp., {\textbf{very good}}) if $\alpha_i > \kappa_1$
    (resp., $\alpha_i \le \kappa_1$);
    \item 
    {\textbf{stop}} the cycle 
    and output
    $(\tx_i,\ty_i)$;
      \end{itemize}
      else
    \begin{itemize}
    \item
        if  $\alpha_i > \kappa_2$ then {\textbf{stop}}
        the cycle and declare it to be {\bf bad};
    \end{itemize}
     \item[4.] 
     set $i\leftarrow
    i+1$ and go to step~1
\end{itemize}
\vspace{-0.4cm}
\noindent\rule[0.5ex]{1\columnwidth}{1pt}

Adaptive GPB subroutine
iteratively solves inexact proximal problem \eqref{def:xj}
and calls the model update
blackbox BU to update models
for $\phi$ on the basis of
the solutions to these inexact proximal problems and to linearizations of $f$ at the solutions to \eqref{def:xj}.
We show in Lemma \ref{lemk2b}  below that if
$\lambda \leq \lambda_*$ for $\lambda_*$
given in that lemma, then for all iterations $i$
of the subroutine we have
$t_i-{\bar \varepsilon}/4 \leq \tau(t_{i-1}-{\bar \varepsilon}/4)$ and therefore
$t_i-{\bar \varepsilon}/4 \leq \tau^{i-1}(t_1-{\bar \varepsilon}/4)$, or equivalently $\alpha_i \leq 1$.
When we are able to end the subroutine with
the stronger condition $\alpha_i \leq \kappa_1 \leq 1$ then we declare 
the cycle to be 
very good and will increase $\lambda$
(in the main loop of Adaptive GPB described below) for the next call to the subroutine. When, on the contrary, $\alpha_i >\kappa_2\geq 1$, we declare the cycle to be bad and will decrease $\lambda$ (in the main loop of Adaptive GPB described below)
for the next call to the subroutine.

\subsection{Main loop of Adaptive GPB}

The main loop of Adaptive GPB adapts the parameter $\lambda$ after each subroutine.
We call a cycle a sequence of iterations of the Adaptive GPB subroutine.

\noindent\rule[0.5ex]{1\columnwidth}{1pt}

Adaptive GPB - Main loop

\noindent\rule[0.5ex]{1\columnwidth}{1pt}
\begin{itemize}
    \item [0.] Let $\hat x_0=x_0 \in \dom h$, $\lambda=\lam_0>0$, $\bar \varepsilon >0$,
and $\tau \in (0,1)$ be given and set $k=1$ and $\hat y_0=\hat x_0$, let $\overline \lambda>0$;
    \item[1.] Call Adaptive GPB subroutine with input $(\tau,\bar \varepsilon,\lambda)$
    and $(x^c,\tilde y_0)=(\hat x_{k-1}, \hat y_{k-1})$ to perform the $k$-th cycle;
    
    \item[2.] If the $k$-th cycle is bad (resp., very good)  then set $\lam
    \leftarrow \lam/2$
    (resp., $\lam \leftarrow \min(2 \lam,\overline \lambda)$);
    \item[3.] If the $k$-th cycle is good or very good, then 
    \begin{itemize}
        \item 
         let  $(\hat x_k,\hat y_k)$ be its output, set 
    $k \leftarrow k+1$, and go to step 1;
    \end{itemize}
    else if the $k$-th cycle is bad then
    \begin{itemize}
        \item 
        go to step~1 to repeat the $k$-th cycle;
    \end{itemize}
\end{itemize}
\vspace{-0.4em}
\noindent\rule[0.5ex]{1\columnwidth}{1pt}

Adaptive GPB calls Adaptive GPB subroutine and updates parameter
$\lambda$ depending on the output of this subroutine: when the cycle is very good, we increase the step size for the next cycle: $\lam \leftarrow \min(2 \lam,\overline \lambda)$. When the cycle is bad, we halve the step size
for the next cycle: $\lam
    \leftarrow \lam/2$, otherwise 
    $\lambda$ is kept constant for the next cycle.
\paragraph{\textit{Remark on the adaptive parameters.}}
The ratio $\alpha_i$ quantifies how the observed decrease within a cycle compares to the nominal theoretical contraction. The parameters $\kappa_1$ and $\kappa_2$ provide a robust algorithmic interpretation of this measure, controlling the criteria for successful cycles and for triggering stepsize reductions. Different choices of these parameters affect the aggressiveness of the stepsize adaptation and the numerical behavior of the method. Their practical impact is investigated numerically in the Supplementary Material.

\section{Complexity analysis}\label{sec:compgpb}

This section is devoted to the convergence and iteration-complexity analysis of the proposed adaptive GPB method. We establish global convergence results and derive complexity bounds under both general convexity and strong convexity assumptions.

The goal of this section is to 
prove Theorem
\ref{thouterall} given below
which gives the 
complexity of Adaptive GPB.
We will use the recent results of the complexity of FSCO framework from
\cite{guigues2024universal}.
We define outer iterations of the method as the iterations
$k$ such that $t_k \leq \bar \varepsilon/2$ (final iterations of a cycle that is not bad).
It is easy to see that
we can cast the  outer iterations of the method
as the outer iterations of 
FSCO framework from
\cite{guigues2024universal}
with parameters 
$\chi=0$, $\sigma = \mu \underline{\lam}$, and $\varepsilon=\bar 
\varepsilon/2$.
However, the complexity of Adaptive GPB does not follow immediately from that of FSCO and
we provide in this section the additional proofs necessary to complete this complexity analysis.

The complexity analysis of FSCO gives us a control on the number of outer iterations
necessary to get an $\bar \varepsilon$ optimal solution. 
In our analysis, we will 
\begin{itemize}
\item[(i)] give a bound on the number of bad cycles;
\item[(ii)] derive a bound on the number of inner iterations of every cycle. 
\end{itemize}
Finally, we need to check that the assumptions needed to apply the results of the complexity analysis of FSCO are satisfied, namely that
\begin{itemize}
\item[(iii)] model $\Gamma \in \mConv{n}$;
\item[(iv)]  there is some (known) $\underline \lambda$
such that $\lambda \geq \underline \lambda$ for all 
iterations;
\item[(v)] the termination criterion
$t_k \leq \bar \varepsilon/2$
will be 
satisfied
in a cycle when $\lambda$ is sufficiently small.
\end{itemize}

Assumption (iii) is satisfied by Adaptive GPB.
We concentrate on showing (i), (ii), (iv), and (v) in what follows.
For our analysis, it will be convenient to define the
concept of stages.
Recall that a cycle is as a sequence of iterations of the subroutine.
Such cycle either is bad or ends for an iteration $i$ such that  $t_i \leq {\bar \varepsilon}/2$. 
Let us call a stage a sequence of
cycles that ends with a cycle ending with $t_i \leq {\bar \varepsilon}/2$, preceded by 
cycles (eventually none of them)  that are all bad. Eventually a stage
can be made of a single cycle that
ends with $t_i \leq {\bar \varepsilon}/2$, therefore preceded by no bad cycle. Using FSCO framework, we show that the number of stages is at most $T_1$ for $T_1$
given by (\ref{eq:T1}).
We will also show that
the number of consecutive bad cycles
is at most $k_2$ and therefore the maximal number of cycles in a stage is $T_2=1+k_2$ for 
$T_2$ and $k_2$ respectively given in
(\ref{eq:T2}) and \eqref{k2bound}. 
Finally, we will show that the number of iterations in a cycle is at most
$T_3:=k_3$ for $k_3$ given in \eqref{formk3}. With these results at hand, we will conclude that the functional complexity
to get an $\bar \varepsilon$ optimal solution with Adaptive GPB
is at most $T_1 T_2 T_2$, see Theorem \ref{thouterall}. We now proceed with our work plan.

Let $x_*$ denote the closest solution of \eqref{eq:ProbIntro2} to the initial point $x_0$ of Adaptive GPB
and let $d_0$ denote its distance to $x_0$, i.e.,
{\small
\begin{equation}\label{defd0}
\|x_0-x_*\|= \min \{\|x-x_0\| : x \in X^*\}, \quad d_0=\|x_0 - x_*\|. 
\end{equation}}

We first show
in Lemma 
\ref{lemk2b} (whose proof is deferred to the Appendix)
that 
the assumptions of the framework are satisfied:
there is
$\underline \lambda>0$ such that
$\lambda \geq \underline \lambda$
for every value $\lambda$ of the step size in the algorithm.
We also show that if we start a cycle with $\lambda$
sufficiently small then
the cycle is not bad, i.e., we end up
the cycle with $t_i \leq {\bar \varepsilon}/2$.

In what follows, we denote by $j_k+1$ the first iteration of cycle $k$.

\begin{lemma} \label{lemk2b}
Define $\varepsilon=\bar \varepsilon/2$ and $\lambda_*$ by
$$
\tau =  1- \left(1+ \frac{4\lam_*(2 {\overline M}_f^2 + \varepsilon {\overline L}_f)  }{\varepsilon}\right)^{-1} \text{, i.e.},
$$   

$$
\lambda_*=\left(\frac{\tau}{1-\tau}\right)\frac{\varepsilon}{4(2 \overline M_f^2+\varepsilon \overline L_f)}.
$$
Then the following holds for Adaptive GPB:
\begin{itemize}
\item[(a)]  if a cycle starts with $\lambda \leq \lambda_*$ then the cycle is not bad and ends  in at most $k_1$ iterations where
{\small
\begin{equation}\label{defk1}
k_1 = 1+\left \lceil k_0 
\right
\rceil
\mbox{ for }
k_0=\frac{1}{\log(\tau^{-1})}\log\left(
\frac{{\bar t}-{\bar \varepsilon}/4}{{\bar \varepsilon}/4}\right)
\end{equation}}
with
\begin{equation}\label{deftbar}
\bar t = 
{\overline M}_f^2 +
4(2+{\overline L}_f)(d_0\max(1,2{\overline \lambda}\,{\overline L}_f)+{\overline \lambda}\,{\overline M}_f)^2;
\end{equation}
\item[(b)] step size $\lambda$ is always
greater than or equal to
\begin{equation}\label{deflbar}
\underline \lambda=\frac{\lambda_*}{2}=
\left(\frac{\tau}{1-\tau}\right)\frac{\varepsilon}{8(2 \overline M_f^2+\varepsilon \overline L_f)}
\end{equation}
and if $\overline \lambda>\underline \lambda$ the number of consecutive bad cycles is at most
\begin{equation}\label{k2bound}
k_2 = \left \lceil \frac{1}{\log(2)} \log\left( \frac{{\overline \lambda}}{\underline \lambda}  \right)  \right \rceil.
\end{equation}
\end{itemize}
\end{lemma}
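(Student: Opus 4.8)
The plan is to prove the two parts in sequence, with part (a) being the analytic heart of the argument and part (b) following by a short counting argument.

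For part (a), the key is to establish the per-iteration contraction $t_i - \bar\varepsilon/4 \le \tau(t_{i-1}-\bar\varepsilon/4)$ whenever the current step size satisfies $\lambda \le \lambda_*$. To do this I would first unpack the defining relations \eqref{def:Gamma}--\eqref{def:bar Gamma} of the BU blackbox: since $\Gamma_i \ge \tau\bar\Gamma_{i-1} + (1-\tau)[\ell_f(\cdot;\tx_{i-1})+h]$ and $\tx_{i-1}$ minimizes $\bar\Gamma_{i-1}(\cdot)+\frac{1}{2\lambda}\|\cdot-x^c\|^2$, I can compare the proximal values $m_i$ and $m_{i-1}$. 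The standard estimate here (as in \cite{liang2023unified,guigues2024universal}) is that $m_i \ge \tau m_{i-1} + (1-\tau)[\ell_f(\tx_i;\tx_{i-1})+h(\tx_i) + \frac{1}{2\lambda}\|\tx_i-x^c\|^2]$ up to the prox term bookkeeping, and then one bounds $\phi(\ty_i) - \ell_f(\tx_i;\tx_{i-1}) - h(\tx_i)$ using (A3): the gap between $f$ and its linearization at a nearby point is controlled by $2\overline M_f\|\tx_i - \tx_{i-1}\| + \frac{\overline L_f}{2}\|\tx_i-\tx_{i-1}\|^2$, and $\|\tx_i - \tx_{i-1}\|$ is in turn controlled by the prox term via the optimality conditions. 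Combining these and using Young's inequality to absorb the quadratic term, one arrives at $t_i - \bar\varepsilon/4 \le \tau(t_{i-1}-\bar\varepsilon/4)$ precisely when $\lambda \le \lambda_* = \frac{\tau}{1-\tau}\cdot\frac{\varepsilon}{4(\overline M_f^2 + \varepsilon\overline L_f)}$ — this is exactly where the stated formula for $\lambda_*$ comes from. Iterating gives $t_i - \bar\varepsilon/4 \le \tau^{i-1}(t_1 - \bar\varepsilon/4)$, hence $\alpha_i \le 1 < \kappa_2$ for all $i$, so the cycle never declares failure. For the iteration count, I need an a priori upper bound $t_1 \le \bar t$: $t_1 = \phi(\ty_1) - m_1$, and since $\Gamma_1 \ge \ell_f(\cdot;x^c)+h$ and $\ty_1 \in \{\tx_1, z_0\}$, one bounds $\phi(\ty_1)$ in terms of $\phi(x^c)$ and the distance $\|\tx_1 - x^c\| \le \lambda\|f'(x^c)\|$ (using the explicit prox solution), then uses (A3) to bound $\|f'(x^c)\|$ in terms of $\overline M_f$, $\overline L_f$, and $d_0$; the constant $\overline\lambda$ enters because $\lambda \le \overline\lambda$ always, and $d_0$ enters because $x^c = \hat x_{k-1}$ stays within a controlled distance of $x_*$ (this requires the FSCO-type descent estimate from \cite{guigues2024universal}). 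This yields \eqref{deftbar}. Then $t_i \le \bar\varepsilon/2$ as soon as $\tau^{i-1}(\bar t - \bar\varepsilon/4) \le \bar\varepsilon/4$, i.e., $i - 1 \ge k_0$, giving $k_1 = 1 + \lceil k_0 \rceil$.

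For part (b), the lower bound $\lambda \ge \underline\lambda = \lambda_*/2$ follows from the update rule: the step size only decreases via $\lambda \leftarrow \lambda/2$ after a failed cycle, but by part (a) a cycle starting with $\lambda \le \lambda_*$ cannot fail; hence whenever a halving occurs the pre-halving value exceeds $\lambda_*$, so the post-halving value exceeds $\lambda_*/2$. Since this is the only mechanism that decreases $\lambda$, and the initial value $\lambda_0$ can be assumed $\ge \underline\lambda$ (or the first few halvings bring it to the invariant region), the invariant $\lambda \ge \underline\lambda$ is maintained. For the bound on consecutive failures: each failed cycle halves $\lambda$, and once $\lambda$ drops to or below $\lambda_*$ the next cycle succeeds (or at least does not fail) by part (a); starting from at most $\overline\lambda$, after $\lceil \log_2(\overline\lambda/\underline\lambda)\rceil$ halvings we have $\lambda \le \underline\lambda\cdot 2^{0} $... more carefully, after $j$ halvings $\lambda \le \overline\lambda/2^j$, and $\overline\lambda/2^j \le \lambda_*$ as soon as $j \ge \log_2(\overline\lambda/\lambda_*) = \log_2(\overline\lambda/(2\underline\lambda)) = \log_2(\overline\lambda/\underline\lambda) - 1$, so at most $\lceil \log_2(\overline\lambda/\underline\lambda)\rceil$ consecutive failures, which is \eqref{k2bound}.

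The main obstacle I anticipate is the precise bookkeeping in part (a): tracking how the prox term $\frac{1}{2\lambda}\|\tx_i - x^c\|^2$ interacts with the convex-combination structure of the BU update across iterations, and pinning down the exact constant so that the threshold comes out to be exactly $\lambda_*$ rather than merely $O(\varepsilon/(\overline M_f^2 + \varepsilon\overline L_f))$. A secondary subtlety is justifying the a priori bound $\bar t$ on $t_1$ uniformly over all cycles, which requires knowing that the prox-centers $\hat x_{k-1}$ remain within a bounded neighborhood of $x_*$ — this is where I would lean on the FSCO machinery from \cite{guigues2024universal} rather than reprove it here.
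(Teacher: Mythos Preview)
Your proposal is correct and follows essentially the same route as the paper: establish the contraction $t_i-\bar\varepsilon/4 \le \tau(t_{i-1}-\bar\varepsilon/4)$ whenever $\lambda\le\lambda_*$, combine it with the a~priori bound $t_1\le\bar t$ to get the iteration count $k_1$ and the no-failure conclusion, and then run the halving--counting argument for part~(b). The only difference is cosmetic: the paper outsources the contraction inequality to Lemma~3.9 of \cite{guigues2024universal} and the bound $t_1\le\bar t$ to Lemma~4.7 of \cite{liang2023unified}, whereas you sketch both directly (your sketch of the contraction via the BU structure, (A3), and Young's inequality is exactly how that external lemma is proved).
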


\paragraph{\textit{Remark on the bounds $\underline{\lambda}$ and $\bar{\lambda}$.}}
The bounds $\underline{\lambda}$ and $\bar{\lambda}$ play different roles in the analysis. Parameter $\bar{\lambda}$ is the algorithmic upper bound in the adaptive update $\lambda \leftarrow \min(2\lambda,\bar{\lambda})$, preventing the prox-stepsize sequence from growing indefinitely. The lower bound $\underline{\lambda}$, defined in (\ref{deflbar}), characterizes the stepsize regime in which contraction and complexity estimates hold and depends on problem constants. The term $\log\!\left(\bar{\lambda}/\underline{\lambda}\right)$ therefore quantifies the worst-case number of stepsize reductions required to bring $\lambda$ into the contraction regime.

\par To study the outer iterations of Adaptive GPB ending with $t_i \leq {\bar \varepsilon}/2$, we need the following result, where items a) and c) derive from Lemma A.2 of \cite{guigues2024universal} and item b) can be easily justified using a) and the fact that $\eta_j \geq 0$.
\begin{lemma} \label{lm:easyrecur1}
		Assume that sequences $\{\gamma_j\}$, $\{\eta_j\}$, and  $\{\alpha_j\}$ satisfy for every $j\ge 1$, $\gamma_j \ge {\underline \gamma}$ and 
		\beq \label{eq:easyrecur1}
		\gamma_j \eta_j \le \alpha_{j-1} - (1+\sigma)\alpha_j + \gamma_j \delta
		\eeq
  for some $\sigma \ge 0$, $\delta \ge 0$ and ${\underline \gamma} > 0$. Then, the following statements hold:
	\begin{itemize}
            \item[a)] for every $k\ge 1$,
            \begin{equation}\label{ineq:eta}
            \min_{1\le j \le k}  \eta_j \le \frac{\alpha_0 - (1+\sigma)^k \alpha_k}{\sum_{j=1}^k (1+\sigma)^{j-1} \gamma_j} + \delta;
        \end{equation}
            \item[b)] if the sequence $\{\eta_j\}$ is nonnegative, then for every $k\ge 1$,
            \begin{equation}\label{ineq:alpha}
            \alpha_k \le \frac{\alpha_0}{(1+\sigma)^k}  +
		\frac{\sum_{j=1}^k (1+\sigma)^{j-1}\gamma_j \delta}{(1+\sigma)^k};
        \end{equation}
		\item[c)]  if the sequence $\{\alpha_j\}$ is nonnegative, then $ \min_{1\le j \le k} \eta_j \le 2\delta $ for every $ k\ge 1$ such that
		\[
		k \ge \min \left\lbrace  \frac{1+\sigma}{\sigma} \log\left( \frac{\sigma \alpha_0}{{\underline \gamma}\delta} + 1 \right), \frac{\alpha_0}{{\underline \gamma}\delta} \right\rbrace
		\]
		with the convention that the first term is equal to the second term
		when $\sigma=0$. (Note  that the first term converges to the second term
		as $\sigma \downarrow 0$.)
	\end{itemize}
\end{lemma}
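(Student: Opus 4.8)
The plan is to treat the three parts by the standard telescoping-with-geometric-weights technique applied to the recursion \eqref{eq:easyrecur1}. First I would rewrite \eqref{eq:easyrecur1} as $(1+\sigma)\alpha_j \le \alpha_{j-1} - \gamma_j\eta_j + \gamma_j\delta$, multiply both sides by $(1+\sigma)^{j-1}$, and sum over $j=1,\dots,k$. The left side telescopes against the first term on the right after the multiplication, leaving $(1+\sigma)^k\alpha_k \le \alpha_0 - \sum_{j=1}^k (1+\sigma)^{j-1}\gamma_j\eta_j + \delta\sum_{j=1}^k(1+\sigma)^{j-1}\gamma_j$. For part (a), bound $\sum_j (1+\sigma)^{j-1}\gamma_j\eta_j \ge \big(\min_{1\le j\le k}\eta_j\big)\sum_j(1+\sigma)^{j-1}\gamma_j$ (valid because all weights $(1+\sigma)^{j-1}\gamma_j$ are nonnegative), rearrange to isolate $\min_j\eta_j$, and divide by $\sum_j(1+\sigma)^{j-1}\gamma_j>0$; this yields \eqref{ineq:eta}.

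For part (b), assume $\eta_j\ge 0$ so that $\sum_j(1+\sigma)^{j-1}\gamma_j\eta_j\ge 0$ and can simply be dropped from the telescoped inequality, giving $(1+\sigma)^k\alpha_k \le \alpha_0 + \delta\sum_{j=1}^k(1+\sigma)^{j-1}\gamma_j$; dividing by $(1+\sigma)^k$ is exactly \eqref{ineq:alpha}.

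For part (c), assume $\alpha_j\ge 0$. I would start from \eqref{ineq:eta} and discard the term $-(1+\sigma)^k\alpha_k\le 0$ in the numerator, so $\min_{1\le j\le k}\eta_j \le \alpha_0/\sum_{j=1}^k(1+\sigma)^{j-1}\gamma_j + \delta$. Using $\gamma_j\ge\underline\gamma$, the denominator is at least $\underline\gamma\sum_{j=1}^k(1+\sigma)^{j-1} = \underline\gamma\,\frac{(1+\sigma)^k-1}{\sigma}$ when $\sigma>0$, and at least $\underline\gamma\,k$ when $\sigma=0$. Hence $\min_j\eta_j\le 2\delta$ as soon as $\alpha_0/\sum_{j}(1+\sigma)^{j-1}\gamma_j \le \delta$, i.e. $\underline\gamma\,\frac{(1+\sigma)^k-1}{\sigma}\ge \alpha_0/\delta$, which rearranges to $(1+\sigma)^k \ge \frac{\sigma\alpha_0}{\underline\gamma\delta}+1$, i.e. $k\ge \frac{1}{\log(1+\sigma)}\log\!\big(\frac{\sigma\alpha_0}{\underline\gamma\delta}+1\big)$; combined with the elementary bound $\log(1+\sigma)\ge \sigma/(1+\sigma)$ this gives the first expression in the stated minimum, and the $\sigma=0$ branch (or the bound $(1+\sigma)^k-1\ge\sigma k$ is too weak; instead use $\sum_{j=1}^k(1+\sigma)^{j-1}\ge k$ directly) gives $k\ge\alpha_0/(\underline\gamma\delta)$ as the second expression. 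Taking the minimum of the two sufficient conditions completes (c).

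The only mildly delicate point is the continuity/convention claim in (c) that the first bound tends to the second as $\sigma\downarrow 0$; this follows from $\lim_{\sigma\to 0}\frac{1+\sigma}{\sigma}\log(1+\sigma x/\cdot)=x/\cdot$ after identifying the argument, so no separate argument for $\sigma=0$ is really needed beyond invoking this limit. I do not anticipate a genuine obstacle here — every step is a direct manipulation of the telescoped inequality, and the main care is simply keeping the direction of inequalities straight when dropping nonnegative terms and when lower-bounding the weighted sum in the denominator.
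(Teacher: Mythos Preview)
Your argument is correct. The paper does not actually supply its own proof of this lemma: it merely quotes it as Lemma~A.1 of \cite{guigues2024universal}. The telescoping-with-geometric-weights derivation you outline---multiply \eqref{eq:easyrecur1} by $(1+\sigma)^{j-1}$, sum, and then either lower-bound $\sum_j(1+\sigma)^{j-1}\gamma_j\eta_j$ by $(\min_j\eta_j)\sum_j(1+\sigma)^{j-1}\gamma_j$ for~(a), drop it for~(b), or combine~(a) with $\alpha_k\ge 0$, $\gamma_j\ge\underline\gamma$, and $\log(1+\sigma)\ge\sigma/(1+\sigma)$ for~(c)---is exactly the standard route and is what one expects the cited proof to contain.

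One cosmetic remark on part~(c): your parenthetical about ``$(1+\sigma)^k-1\ge\sigma k$ is too weak'' is not quite right---that inequality is equivalent to $\sum_{j=1}^k(1+\sigma)^{j-1}\ge k$ after dividing by $\sigma$, so either phrasing yields the second branch $k\ge\alpha_0/(\underline\gamma\delta)$. This does not affect the validity of the argument.
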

Using the notation in \cite{guigues2024universal}, considering a prox-center $x^{-} \in \operatorname{dom} \phi$ and $u \in \mathbb{R}^n$, each iteration of FSCO solves approximately a prox subproblem of the form:
\begin{equation}\label{prox_sub_bb}
\min _{u \in \mathbb{R}^n}\left\{\phi(u)+\frac{1}{2 \lambda}\left\|u-x^{-}\right\|^2\right\},
\end{equation}
for some $\lambda>0$, to obtain an inexact solution $y$. The following black box BB is used by FSCO and describes how (\ref{prox_sub_bb}) is approximately solved. Given a prox-center $x^-$, an accuracy $\varepsilon$, BB outputs new approximate solutions $y$ and $x$ (with some gap for solution $y$ upper-bounded by $\varepsilon$) and a new model $\Gamma$ for $\phi$. 

\noindent\rule[0.5ex]{1\columnwidth}{1pt}

Black-box (BB) $\left(x^{-},\varepsilon\right)$

\noindent\rule[0.5ex]{1\columnwidth}{1pt}
\textbf{Input:} $\left(x^{-}, \varepsilon\right) \in \operatorname{dom} \phi \times \mathbb{R}_{++}$.
\\
\textbf{Output:} $(x, y, \Gamma) \in \operatorname{dom} \phi \times \operatorname{dom} \phi \times \overline{\operatorname{Conv}}\left(\mathbb{R}^n\right) \times \mathbb{R}_{++}$ satisfying $\Gamma \leq \phi$,
$$
\phi(y)+\frac{\chi}{2 \lambda}\left\|y-x^{-}\right\|^2-\min _{u \in \mathbb{R}^n}\left\{\Gamma(u)+\frac{1}{2 \lambda}\left\|u-x^{-}\right\|^2\right\} \leq \varepsilon, 
$$$$
\text{ and }
x=\underset{u \in \mathbb{R}^n}{\operatorname{argmin}}\left\{\Gamma(u)+\frac{1}{2 \lambda}\left\|u-x^{-}\right\|^2\right\}.
$$
\noindent\rule[0.5ex]{1\columnwidth}{1pt}

The following proposition is necessary for the proof of Theorem \ref{thouter}. This is Proposition 3.1 of \cite{guigues2024universal}, considering $\chi = 0$.

\begin{proposition}\label{prop3.1_glm24}
Suppose that $\phi \in \overline{\operatorname{Conv}}\left(\mathbb{R}^n\right)$ and let $\mu=\mu_\phi$. Suppose also that $(x, y, \Gamma)=\operatorname{BB}\left(x^{-}, \varepsilon\right)$ for some $\left(x^{-}, \varepsilon\right) \in \operatorname{dom} \phi \times \mathbb{R}_{++}^n$, and $\Gamma \in \overline{\operatorname{Conv}}_\nu\left(\mathbb{R}^n\right)$ for some $\nu \in[0, \mu]$. Then, for all $u \in \mathbb{R}^n$, we have, with $\sigma:={\underline \lambda} \nu$:
\begin{equation}
2 \lambda[\phi(y)-\phi(u)] \leq 2 \lambda \varepsilon +\left\|x^{-}-u\right\|^2-(1+\sigma)\|x-u\|^2.
\end{equation}
\end{proposition}

The complexity of the outer iterations
of Adaptive GPB (the maximal number of cycles generating a solution with
$t_i \leq \bar \varepsilon/2$) is given in the following theorem.
\begin{theorem}\label{thouter}
For a given tolerance $\bar \varepsilon>0$, 
 the number of  outer iterations $k$ 
 such that
 $t_k \leq \bar \varepsilon$
  necessary to generate an iterate $\hat y_k$ satisfying $\phi(\hat y_k)-\phi_* \leq \bar \varepsilon$ is at most
\begin{equation}\label{complprimal}
\min\left\{\left( 1+\frac{1}{\underline{\lam} \mu}\right) \log\left(1+ \frac{\mu d_0^2}{\bar \varepsilon}\right), \frac{d_0^2}{{\underline \lambda}{\bar \varepsilon}} \right\}.
\end{equation}
\end{theorem}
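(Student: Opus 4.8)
The plan is to reduce the statement to a single application of Lemma~\ref{lm:easyrecur1}(c), after establishing a one-cycle descent inequality for the optimality gap. Throughout, let $x_*$ be the optimal solution from \eqref{defd0} (so $\|x_0-x_*\|=d_0$), and re-index so that $k$ counts \emph{only} the cycles that do not end with failure; by the description of the main loop, $k$ is advanced and the prox-center is updated precisely on those cycles, so the prox-center passed to the $k$-th such cycle is exactly $\hat x_{k-1}$, with $\hat x_0=x_0$, and the step size is constant during that cycle, say equal to $\lambda_k$.

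The first step is the per-cycle inequality: every non-failing cycle $k$ satisfies
\[
\lambda_k\big(\phi(\hat y_k)-\phi_*\big)\ \le\ \frac12\|\hat x_{k-1}-x_*\|^2-\frac{1+\lambda_k\mu}{2}\,\|\hat x_k-x_*\|^2+\frac{\lambda_k\bar\varepsilon}{2}.
\]
Indeed, a non-failing cycle stops in Step~3 of the subroutine with $t:=\phi(\hat y_k)-m\le\bar\varepsilon/2$, where $m=\Gamma(\hat x_k)+\frac1{2\lambda_k}\|\hat x_k-\hat x_{k-1}\|^2$ and $\hat x_k$ is the minimizer in \eqref{def:xj}. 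Since the model $\Gamma$ fed to \eqref{def:xj} is $\mu$-convex (the models produced by BU are $h$ plus a convex piecewise-affine minorant of $f$; cf.\ \eqref{eq:x-pre}), the objective $\Gamma(\cdot)+\frac1{2\lambda_k}\|\cdot-\hat x_{k-1}\|^2$ is $(\mu+1/\lambda_k)$-strongly convex with minimizer $\hat x_k$. Evaluating its strong-convexity lower bound at $u=x_*$, using $\Gamma(x_*)\le\phi(x_*)=\phi_*$ and $m\ge\phi(\hat y_k)-\bar\varepsilon/2$, and multiplying through by $\lambda_k$ yields the displayed inequality.

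Next I would invoke Lemma~\ref{lm:easyrecur1} with $\alpha_j=\frac12\|\hat x_j-x_*\|^2$ (hence $\alpha_0=\frac12 d_0^2$ and $\alpha_j\ge0$), $\eta_j=\phi(\hat y_j)-\phi_*$, $\gamma_j=\lambda_j$, $\delta=\bar\varepsilon/2$, $\underline\gamma=\underline\lambda$ (Lemma~\ref{lemk2b}(b)), and $\sigma=\underline\lambda\mu$. Condition \eqref{eq:easyrecur1} holds because the inequality above reads $\gamma_j\eta_j\le\alpha_{j-1}-(1+\lambda_j\mu)\alpha_j+\gamma_j\delta$, and $(1+\lambda_j\mu)\alpha_j\ge(1+\sigma)\alpha_j$ since $\alpha_j\ge0$ and $\lambda_j\ge\underline\lambda$. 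Part~(c) of the lemma then gives $\min_{1\le j\le k}\big(\phi(\hat y_j)-\phi_*\big)\le 2\delta=\bar\varepsilon$ whenever
\[
k\ \ge\ \min\left\{\frac{1+\sigma}{\sigma}\,\log\!\left(\frac{\sigma\alpha_0}{\underline\gamma\,\delta}+1\right),\ \frac{\alpha_0}{\underline\gamma\,\delta}\right\},
\]
and substituting $\sigma=\underline\lambda\mu$, $\alpha_0=d_0^2/2$, $\underline\gamma=\underline\lambda$, $\delta=\bar\varepsilon/2$ turns this threshold into exactly \eqref{complprimal}. Finally, since cycle $k$ is initialized with $z_0=\hat y_{k-1}$ and \eqref{def:txj} forces $\phi$ to be non-increasing along the inner iterates, one gets $\phi(\hat y_k)\le\phi(\hat y_{k-1})$, so $\min_{1\le j\le k}(\phi(\hat y_j)-\phi_*)=\phi(\hat y_k)-\phi_*$ and $\hat y_k$ itself attains the target accuracy.

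The delicate point is the one-cycle inequality: one must make sure the model active at the terminal inner iteration is genuinely $\mu$-convex — this is what supplies the $(1+\lambda_k\mu)$ factor and hence the logarithmic term in \eqref{complprimal} — and that ``does not end with failure'' really forces $t\le\bar\varepsilon/2$ at termination, which is read off Step~3. The remainder is bookkeeping: tracking which cycles advance the counter $k$ and the prox-center, using Lemma~\ref{lemk2b}(b) to guarantee $\lambda_k\ge\underline\lambda$, and matching constants when specializing Lemma~\ref{lm:easyrecur1}(c).
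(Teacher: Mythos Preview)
Your proof is correct and follows essentially the same route as the paper: establish a per-cycle descent inequality and then apply Lemma~\ref{lm:easyrecur1}(c) with $\sigma=\underline\lambda\mu$, $\delta=\bar\varepsilon/2$, and $\alpha_0$ proportional to $d_0^2$. The only substantive difference is that the paper obtains the per-cycle inequality \eqref{ohiustb} by invoking Proposition~2.2 of \cite{guigues2024universal}, whereas you derive it directly from the $(\mu+1/\lambda_k)$-strong convexity of $\Gamma(\cdot)+\tfrac{1}{2\lambda_k}\|\cdot-\hat x_{k-1}\|^2$ together with $\Gamma\le\phi$ and $t\le\bar\varepsilon/2$; you also make explicit the monotonicity $\phi(\hat y_k)\le\phi(\hat y_{k-1})$ (via $z_0=\hat y_{k-1}$ and \eqref{def:txj}) so that the minimum in Lemma~\ref{lm:easyrecur1}(c) is attained at the last iterate, a step the paper leaves implicit.
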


\begin{proof}
Recall that 
we can cast the final iteration of
cycles 
of Adaptive GPB that 
are not bad
as the outer iterations of 
FSCO framework from
\cite{guigues2024universal}
with parameters 
$\nu=\mu$, $\sigma = \mu \underline{\lam}$, $\varepsilon=\bar \varepsilon/2$
and using Proposition \ref{prop3.1_glm24} with
these parameters, we have
{\small 
\begin{equation}\label{ohiustb}
    2 \lambda_k\left[\phi(\hat y_k)-\phi(u)\right]  \le 2\lam_k \varepsilon+ \left\|\hat x_{k-1}-u\right\|^2 - (1+\mu \underline{\lam})\|\hat x_k-u\|^2
\end{equation}}
for every $u$
where $\lambda_k$
is the value of
$\lambda$
for cycle $k$
of Adaptive GPB.
It is easy to see that \eqref{ohiustb} with $u=x_*$, $\varepsilon=\bar \varepsilon/2$, satisfies \eqref{eq:easyrecur1} with $\sigma={\underline \lam} \mu$ and
{\small
\begin{equation}\label{eq:parameter}
    \gamma_k = 2\lam_k, \quad \eta_k=\phi(\hat y_k) - \phi_*, \quad \alpha_k = \|\hat x_k-x_*\|^2, \quad \delta = \frac{\bar \varepsilon}2.
\end{equation}}
Also, note that
$
{\underline \gamma} = 2{\underline \lam}$. It follows from Lemma \ref{lm:easyrecur1}(c) with the above parameters that the complexity to find a $\bar \varepsilon$-solution is
\begin{equation}\label{eq:complexity}
    \min\left\{ \left(1+\frac{1}{\mu \underline{\lam}}\right) \log\left(1+ \frac{\sigma d_0^2}{{\underline \lambda}{\bar \varepsilon}}\right), \frac{d_0^2}{{\underline \lambda}{\bar \varepsilon}} \right\}.  
\end{equation}
\end{proof}

\begin{proposition}\label{propmaxgpb} The maximal number of iterations of Adaptive GPB subroutine (called inner iterations of Adpative GPB), i.e., the maximal number of iterations of every cycle, is 
\begin{equation}\label{formk3}
k_3=2+\left\lceil
\frac{1}{\ln(\tau^{-1})}
\ln\left(\frac{\kappa_2(\bar t-{{\bar \varepsilon}/4})}{{\bar \varepsilon}/4} \right)
\right \rceil.
\end{equation}
where 
$\bar t$ is given by \eqref{deftbar}.
\end{proposition}
\begin{proof}
Recall from \eqref{upptbar} that
$t_1 \leq \bar t$ which implies
$$
k_3 > 1+ \frac{1}{\ln(\tau^{-1})}
\ln\left(\frac{\kappa_2(t_1-{{\bar \varepsilon}/4})}{{\bar \varepsilon}/4}\right)
$$
which can be written
\begin{equation}\label{ineqtaui0}
\tau^{k_3-1} < \frac{{\bar \varepsilon}/4}{\kappa_2(t_1-{\bar \varepsilon}/4)}.
\end{equation}
Let us now show by contradiction that
every cycle of Adaptive GPB subroutine terminates in at most $k_3$ iterations, 
with either $t_i \leq {\bar \varepsilon}/2$
or with $\alpha_i>\kappa_2$ (bad cycles). Assume for contradiction that
$t_i> \bar \varepsilon/2$
and $\alpha_i \le \kappa_2$ for every $i =1,\ldots,k_3$. Then
\[
     \kappa_2 \ge \alpha_{k_3} = \frac{t_{k_3}-\bar \varepsilon/4}{\tau^{k_3-1} (t_{1}-\bar \varepsilon/4)} >
     \frac{\bar \varepsilon/4}{\tau^{k_3-1} (t_{1}-\bar \varepsilon/4)}
    \]
which is in contradiction with \eqref{ineqtaui0}.
\end{proof}

The final complexity of Adaptive GPB, giving the maximal number of inner iterations (total number of iterations) to get an $\bar \varepsilon$-optimal solution of \eqref{eq:ProbIntro2} is given by Theorem \ref{thouterall}.
\begin{theorem}\label{thouterall}
Take $\overline \lambda>\underline \lambda$. The maximal total number of inner iterations
for Adaptive GPB to find a solution
$\hat y_k$ satisfying
$\phi(\hat y_k)-\phi_* \leq {\bar \varepsilon}$ is equal to $T_1T_2T_3$, where

\small{
\begin{align}
T_1 &=
\min\left\{
\left(1+\frac{1}{\mu \underline{\lambda}}\right)
\log\left(1+\frac{\mu d_0^2}{\bar{\varepsilon}}\right),
\frac{d_0^2}{\underline{\lambda}\bar{\varepsilon}}
\right\}, \label{eq:T1}\\
T_2 &=
1+\left\lceil
\frac{1}{\log(2)}
\log\left(
\frac{\overline{\lambda}}{\underline{\lambda}}
\right)
\right\rceil, \label{eq:T2}\\
T_3 &=
2+\left\lceil
\frac{1}{\ln(\tau^{-1})}
\ln\left(
\frac{\kappa_2(\bar t-\bar{\varepsilon}/4)}
{\bar{\varepsilon}/4}
\right)
\right\rceil. \label{eq:T3}
\end{align}}

where $\underline \lambda$, $d_0$, and 
$\bar t$ are respectively given in
\eqref{deflbar}, \eqref{defd0}, and \eqref{deftbar}.
\end{theorem}
\begin{proof}
To bound the number of iterations to get an an $\bar \varepsilon$ optimal solution of \eqref{eq:ProbIntro2}, recall that we have grouped the inner iterations in cycles and the cycles in stages. Therefore if $T_1$ is
the maximal number of stages, if 
$T_2$ is the maximal number of cycles in a stage, and if $T_3$ is the maximal number of iterations in a cycle, the complexity of Adaptive GPB is 
$T_1 T_2 T_3$. By Theorem \ref{thouter}, $T_1$ is given by (\ref{eq:T1}). We also have that $T_2$
is 1 plus the maximal number of consecutive bad cycles.
By Lemma \ref{lemk2b}-\eqref{k2bound}
we have that  this quantity $T_2$
is given by (\ref{eq:T2}).
Finally, by Proposition \ref{propmaxgpb}, $T_3$ is given by (\ref{eq:T3}).

This achieves the proof of the theorem.
\end{proof}

The bound in Theorem \ref{thouterall} recovers the classical complexity rates for first-order methods: a logarithmic dependence on $\bar{\varepsilon}$ in the strongly convex case and a $\mathcal{O}(1/\bar{\varepsilon})$ dependence in the general convex setting. The remaining factors depend only logarithmically on the algorithmic parameters. Hence, the bound is optimal up to logarithmic factors.

The iteration-complexity bound established in Theorem \ref{thouterall} for the Adaptive GPB method can be directly compared with results obtained for related proximal bundle schemes. In particular, Theorem 3.1 in \cite{liang2023unified} derived a unified complexity bound for the Generic Proximal Bundle (GPB) framework, showing that any GPB variant achieves an $\varepsilon$-solution in a number of iterations bounded (up to logarithmic factors) by expressions depending on $(M_f, L_f, \mu, d_0, \varepsilon)$. Similarly, in \cite{guigues2024universal}, the Theorem 2.3 established functional iteration-complexity bounds for the Universal Proximal Bundle (U-PB) method, which under mild conditions reduce to nearly optimal rates of order $\bar{\mathcal{O}}\left(M_f^2 /(\varepsilon \mu_\phi) + L_f / \mu_\phi\right)$.
Compared with these results, Theorem \ref{thouterall} shows that Adaptive GPB attains the same classical first-order dependence on the tolerance $\varepsilon$, the strong convexity parameter $\mu$, and the initial distance $d_0$, up to logarithmic factors. In contrast to \cite{liang2023unified}, where suitable parameters must be chosen using knowledge of problem constants, the adaptive scheme adjusts the stepsize dynamically through a cycle-based contraction test. As a consequence, the potential overhead of adaptivity appears only in logarithmic terms associated with stepsize adjustment, while preserving the same fundamental $\varepsilon$-dependence. Therefore, the rates obtained here are consistent with, and complementary to, the bounds previously established for GPB in \cite{liang2023unified} and for U-PB in \cite{guigues2024universal}, confirming that Adaptive GPB fits naturally within the broader bundle-method literature.

\if{
\subsection{Inner convergence}
It follows from Lemma 3.9 of \cite{guigues2024universal} that
\begin{equation}\label{ineq:tj-recur}
         t_{j+1}-\frac{\varepsilon}2 \le \tau \left(t_j -\frac{\varepsilon}2\right)
     \end{equation}
with
\begin{equation}\label{deftauk}
\tau \geq  1- \left(1+ \frac{4\lam({\overline M}L_f^2 + \varepsilon {\overline L}_f)  }{\varepsilon}\right)^{-1}.
\end{equation} 

So, in either subroutine, we want to revise $\lam$ or $\tau$ so that
\eqref{deftauk} holds.

Consider Subroutine 2 and let us call a cycle a sequence
of iterations, called inner iterations, in this subroutine.
Oberve that for this subroutine, we end either with success or with failure.
We have bounded from above the number of cycles that end with success.
Let us bound for these
cycles the number of inner iterations.
We have two types
of such cycles: 
\begin{itemize}
\item[(a)] Cycles where
for all iteration
$k$ before termination
at an iteration $i$ 
with $t_i \leq {\bar \varepsilon}/2$ we have
\begin{equation}\label{cyclea}
t_k-{\bar \varepsilon}/4 \leq \tau(t_{k-1}-{\bar \varepsilon}/4),\;k=1,\ldots,i;
\end{equation}
\item[(b)] cycles where
for some iteration
$k$ before termination
at an iteration $i$ 
with $t_i \leq {\bar \varepsilon}/2$ we have
$t_k-{\bar \varepsilon}/4 > \tau(t_{k-1}-{\bar \varepsilon}/4)$. In this cycle, we have 
at least an update
of $\tau$
of form
$\tau \leftarrow (1+\tau)/2$
and at most ${\bar n}_{\tau}$  such updates.
\end{itemize} 

For cycles described in item
(a) above, we have
at most
\begin{equation}\label{defkob}
\bar k=1+\left \lceil k_0 
\right
\rceil
\mbox{ for }
k_0=\frac{1}{\log(\tau_0)}\log\left(
\frac{{\bar \varepsilon}/4}{{\bar t}-{\bar \varepsilon}/4}\right)
\geq {\bar k}_0= \frac{1}{\log(\tau)}\log\left(
\frac{{\bar \varepsilon}/4}{{\bar t}-{\bar \varepsilon}/4}\right)
\end{equation}
inner iterations. Indeed,
for $k \geq {\bar k}$ we have 
\begin{equation}\label{successkbar}
t_{k} \leq {\bar \varepsilon}/2
\end{equation}
which implies that we finish the cycle with success in at most 
$\bar k$ inner iterations.
Indeed, relation \eqref{successkbar} comes
from the relations
$$
\begin{array}{lcl}
t_{k} &\leq &{\bar \varepsilon}/4 + \tau^{{k}-1}(t_{1}-{\bar \varepsilon}/4) \mbox{ by }\eqref{cyclea},\\
&\leq &{\bar \varepsilon}/4 + \tau^{{k}-1}({\bar t}-{\bar \varepsilon}/4) \mbox{ using }t_1 \leq {\bar t},\\
&\leq &{\bar \varepsilon}/4 + \tau^{{\bar k}-1}({\bar t}-{\bar \varepsilon}/4) \mbox{ for }k \geq {\bar k},\\
&\leq &{\bar \varepsilon}/4 + \tau^{k_0}({\bar t}-{\bar \varepsilon}/4),\\
&\leq &{\bar \varepsilon}/4 + \tau^{\bar k_0}({\bar t}-{\bar \varepsilon}/4),\\
&\leq &{\bar \varepsilon}/2
\mbox{ using }
\eqref{defkob},
\end{array}
$$
for $k \geq \bar k$.

For cycles of type (b) above, 
the maximal number of iterations
$k$ such that
$t_k-{\bar \varepsilon}/4 > \tau(t_{k-1}-{\bar \varepsilon}/4)$ is $\bar n_{\tau}$
and in the worst case, we have
$\bar k-1$ iterations between such iterations
(indeed if we have $\bar k$ iterations
$i$ with $t_i-{\bar \varepsilon}/4 \leq \tau(t_{i-1}-{\bar \varepsilon}/4)$ then we finish the cycle with success) so the maximal number of inner iterations
for cycles of type (b) above is
$\bar k(\bar n_{\tau}+1)$.
Indeed, finishing with success, we have
at most $\bar n_\tau$
sequences of iterations 
made up of at most
$\bar k$ iterations 
$i$ with
$t_k-{\bar \varepsilon}/4 > \tau(t_{k-1}-{\bar \varepsilon}/4)$
followed by an iteration $k$ with 
$t_k-{\bar \varepsilon}/4 > \tau(t_{k-1}-{\bar \varepsilon}/4)$

Therefore
the number of inner iterations
for cycles with success is at most
$$
(1+{\bar n}_{\tau})
\left(1+ \left \lceil \frac{1}{\log(\tau_0)}\log\left(
\frac{{\bar \varepsilon}/4}{{\bar t}-{\bar \varepsilon}/4}\right)
\right \rceil\right).
$$

For cycles that end with failure, using the same reasoning as above, we have at most
$(\bar n_{\tau}+1)\bar k$ inner iterations and the number of cycles with failure
is upper bounded by 
$\ell_0$,
the smallest integer $\ell$
such that
$$
\frac{\tau_0}{2^\ell}
+\sum_{i=1}^{\ell} \frac{1}{2^i} \geq 1 - \frac{1}{1+\frac{\lambda_0 C}{2^{\ell}}},
$$
where
$$
C=\frac{4({\overline M}_f^2 + \varepsilon {\overline L}_f)  }{\varepsilon}.
$$
Indeed, after $\ell$ cycles
with failure, we have
updated at least $\ell$ times
$\tau$ as $\tau\leftarrow (1+\tau)/2$ 
and $\lambda$ as
$\lambda \leftarrow \lambda/2$.
Therefore $\tau$ is equal to
$\frac{\tau_0}{2^\ell}
+\sum_{i=1}^{\ell} \frac{1}{2^i}$
while $\lambda$ is equal
to $\lambda_0/2^\ell$.
It follows that for $\ell \geq \ell_0$
relation \eqref{deftauk} holds, meaning
that we will only have cycles ending with success.

Therefore the complexity of GPB with subroutine 2
is
$$
\begin{array}{l}
\Big(1+\bar n_\tau\Big)\left(\ell_0+\min\left\{\left( 1+\frac{1}{\underline{\lam} \mu}\right) \log\left(1+ \frac{\mu d_0^2}{\bar \varepsilon}\right), \frac{d_0^2}{{\underline \lambda}{\bar \varepsilon}} \right\}\right)
\left(1+\left \lceil \frac{1}{\log(\tau_0)}\log\left(
\frac{{\bar \varepsilon}/4}{{\bar t}-{\bar \varepsilon}/4}\right)
\right \rceil
\right)
\end{array}
$$

Now consider Subroutine 1.
When we end with good success, we have $i=1$ or $\alpha_i \leq \kappa_1$
which implies, using the same reasoning as before, that the number of inner iterations of such cycles
is at most
$$
1+\left \lceil 
\frac{1}{\log(\tau_0)}\log\left(
\frac{{\bar \varepsilon}/4}{\kappa_1({\bar t}-{\bar \varepsilon}/4}\right)
\right
\rceil.
$$
Similarly, in Subroutine 1, for cycles ending neither with good
success nor with failure we have $\alpha_i \leq \kappa_2$ and following
the reasoning above, at most
$$
1+\left \lceil 
\frac{1}{\log(\tau_0)}\log\left(
\frac{{\bar \varepsilon}/4}{\kappa_2({\bar t}-{\bar \varepsilon}/4}\right)
\right
\rceil
$$
innner iterations.

}\fi

\section{Numerical experiments}\label{sec:num}
This section reports numerical experiments illustrating the practical performance of the proposed methods.

We present several academic problems frequently used in the literature of Bundle
Methods and present in \cite{
skajaaLimitedMemoryBFGS,
SagastizabalCompositeProximalBundle2013, deoliveiraDoublyStabilizedBundle2016}.
The problems are solved until an approximate solution gives a value which is
$\varepsilon > 0$ close to the known optimal value. We report both the CPU time
and the number of oracle calls. The experiments compare the following methods:
LVL \cite{LemarechalNewVariantsBundle1995},
PBM-1 \cite{kiwielProximityControlBundle1990, deoliveiraDoublyStabilizedBundle2016},
U-PB \cite{guigues2024universal},
Adaptive GPB OneCut and Adaptive GPB TwoCuts,
and GPB variants (GPB 1C, GPB 2C, and GPB MC) from \cite{liang2023unified}.

All methods described above were tested in every scenario. However, only the most competitive results are reported in the tables. In particular, when a method required a significantly longer runtime (by an order of magnitude) or failed to converge within the maximum number of iterations, its result is omitted and indicated by “--” (we run the process considering a maximum number of iterations well above the average number of iterations that methods execute to reach the desired tolerance). Additional implementation details, detailed numerical tables and figures for all experiments and parameters adopted for each experiment are presented in the online supplementary material.

\subsection{BadGuy}\label{secbadguy}
The following small scale problem is denoted as BadGuy in
\cite{SagastizabalCompositeProximalBundle2013} and was introduced in \cite[Example
1.1.2 p.277]{Hiriart-UrrutyConvexAnalysisMinimization1993} as a pathological
example where the cutting plane method needs roughly
$\left(\frac{C}{2\sqrt{\varepsilon}}\right)^n$ iterations to converge.
Bundle Methods were designed to be regularization of the cutting plane method
and the problem BadGuy serves as an illustration of the process of bundle
methods to handle such pathological examples where the iterates of the cutting
plane method suffer from a so-called "bang-bang" effect where they are thrown
from one side of the domain to the other.

Given $\varepsilon \in (0, 1/2)$, we want to minimize the objective function of
BadGuy for $n=10$ 
(as in the original 
formulation of the
problem): 
\begin{equation}
    \label{eq:BadGuy}
    f_1(y, \eta) = \max \left\{ \lvert \eta \rvert, -1 + 2\varepsilon + \lVert y \rVert_2 \right\},
\end{equation}
where $(y,\eta)$ is in the unit ball of $\R^n \times \R$. The optimal value is
$0$ attained at $(0, y)$ for any $y \in B(0, 1 - 2\varepsilon)$, considering the starting point $x_0=(1,\cdots,1).$

Our Adaptive GPB variants solve the problem much quicker even if the tolerance is lower. 

\subsection{Chained CB3 II}\label{seccb3}

We consider 
Chained CB3 II problem
from \cite{CharalambousNonlinearMinimaxOptimization1976} as reported in
\cite{HaaralaNewLimitedMemory2004}. In this problem we aim to minimize the
piecewise convex function
{\small
\begin{equation}
    \label{eq:CB3}
    \begin{aligned}
    f_2(x) = \max \left\{ \sum_{i=1}^{n-1}(x_i^4 + x_{i+1}^2), \sum_{i=1}^{n-1} (2-x_i)^2 + (2-x_{i+1})^2, \right. \\
 \left. \sum_{i=1}^{n-1} 2\exp(-x_i + x_{i+1})\right\},
    \end{aligned}
\end{equation}}
where $x\in \R^n$. The optimal value is equal to $2(n-1)$. Since the optimal target values are in the range of $10^3$--$10^4$, this tolerance already guarantees a good relative accuracy for the problem. Among the tested methods, Adaptive GPB OneCut achieved the best performance in terms of CPU time for both problem sizes. Although Adaptive GPB TwoCuts was not the fastest method overall, it obtained the second best result for $n=1000$ and remained competitive for $n=5000$, not appearing among the slowest methods. These results highlight the efficiency of the adaptive variants, particularly the OneCut version, for this problem instance. We considering $x_0=(0,\cdots,0)$ for all tests and a maximum number of iterations of $500000$.

\subsection{MaxQuad}\label{maxquad_subsection}
We consider the famous (to test methods for nondifferentiable problems) convex non-smooth optimization problem introduced by Lemaréchal
et al. and called MaxQuad in \cite[Section
12.1.1]{bonnansNumericalOptimizationTheoretical2006}. The problem is to minimize over $\R^n$ 
\begin{equation}
f_3(x) = \max_{1\leq \ell \leq N} \langle A_\ell x, x \rangle + \langle b_\ell,x \rangle,
\end{equation}
with $n=10$, $N=5$, and where for each $\ell \in \{1, \ldots, 5\}$ we have
$b_\ell(i) = -\exp (i/ \ell) \sin (i \ell)$, $i=1,\ldots,n$, and $A_{\ell}$ is
symmetric, considering $i=1,\ldots,n,k=i+1,\ldots,n$, with 
\begin{equation}
    \left\{
        \begin{array}{lcl}
            A_{\ell}(i,k) &= &\exp(i/k)\cos(i k)\sin(\ell),\\
        A_{\ell}(i,i)&=&\displaystyle (i/n)\left|\sin(\ell)\right|+\sum_{k \neq i} |A_{\ell}(i,k)|. 
        \end{array}
    \right.
\end{equation}

One of the main features of MaxQuad is that at the unique minimizer, 4 of the 5
quadratics defining $f_3$ are active. Another important feature of MaxQuad is
that each quadratics is flat in some directions, steep on others and overall,
the pointwise maximum $f_3$ is steep in every direction. Based on the results, it can be seen that Adaptive GPB OneCut delivers similar time results to GPB 1C, only losing in the case where it is considered $\varepsilon=10^{-3}$. Adaptive GPB TwoCuts shows improved performance for all tolerances. We considering $x_0=(1,\cdots,1)$ for all tests and a maximum number of iterations of $500000$.

\subsection{MXHILB}\label{secMXHILB}
The following problem is associated with the computation of the kernel of the
Hilbert matrix. It was first proposed by Kiwiel in
\cite{KiwielEllipsoidTrustRegion1989} and is given by
\begin{equation}
    \label{eq:MXHILB}
 \min_{x \in \mathbb{R}^n} \; f_4(x):= \max_{1\leq i \leq n}\left\{ \lvert x_i \rvert \sum_{j=1}^n \frac{1}{i+j-1} \right\}.
\end{equation}
The minimizer is $x^* = 0$ with optimal value $0$. We start from $x_0=(1,\cdots,1)$ $\in \R^n$. 

We tested different values of $n$ for the MXHILB problem, considering a maximum of $300000$ iterations. 
Starting from the first tested dimension ($n=100$), GPB OneCut and TwoCuts frequently reached the maximum 
number of iterations without achieving the desired tolerance, despite having relatively fast individual iterations. 
The U-PB method exhibited a similar issue from $n=500$ onward, but without the same per-iteration speed. 
Although GPB MultiCuts converged within the iteration limit, its execution time was significantly higher, 
remaining far behind the other methods. For these reasons, the analysis in this subsection focuses only on the methods with competitive performance, 
namely PBM-1, LVL, and Adaptive GPB (OneCut and TwoCuts). The performance of both versions of the adaptive GPB is superior to LVL and PBM-1, as well as superior to other methods discarded for the aforementioned reasoning.

\subsection{RandMaxQuad}\label{sec_randmaxquad}
We now consider the box constrained and non-smooth optimization problem
\begin{equation}
    \label{eq:RandMaxQuad}
\min_{{\underline x} \leq x \leq {\overline x}   }\;    f_5(x) := \left[ \max_{1\leq i\leq N} \langle A_ix, x \rangle + \langle b_i,x \rangle + \alpha \lVert x \rVert_1 \right].
\end{equation}

This problem class was used for numerical tests in
\cite{deoliveiraDoublyStabilizedBundle2016}. We take $\alpha = 0.5$, $n = 200$, $N = 5$, and $A_i$ is a randomly generated symmetric positive
definite matrix with condition number equal to $n$ and $b_i$ is a random vector.
The initial point is $(1, 1,\ldots, 1)$ for all methods.

Our Adaptive GPB TwoCuts variant is faster than the other models tested. However, the Adaptive GPB OneCut model does not perform as well in this example, despite outperforming U-PB, LVL, PBM-1, and GPB MC.

\subsection{TiltedNorm}\label{sectilted}

We want to minimize over $X$ the function
\begin{equation}
    \label{eq:tiltednorm}
f_6(x)= \alpha \lVert A x \rVert_2 + \beta e_1^T A x,
\end{equation}
where $X \subset \mathbb{R}^n$ is the box $[-2, 2]^n$, $\alpha \geq \beta$ are
two real numbers, $A$ is a given symmetric positive definite matrix, and $e_1 =
(1, 0, \ldots 0)$. For $\alpha \geq \beta$, by  Cauchy–Schwarz inequality, we
have $f_6(x)\geq (\alpha-\beta)\|Ax\|_2$ and therefore the optimal value is 0 with corresponding optimal solution $x_* = 0$. We consider a
special case from this problem class taking $\alpha=4$ and $\beta=3$. This
instance was used as a test problem in \cite{skajaaLimitedMemoryBFGS,
SagastizabalCompositeProximalBundle2013, deoliveiraDoublyStabilizedBundle2016}
and called TiltedNorm. For all methods, the initial point is $x_0 = (1, 1,
\ldots, 1)$ and the maximum number of iterations is $500000$. The results show that Adaptive GPB OneCut and Adaptive GPB TwoCuts variants exhibit good numerical performance. In contrast to GPB 1C, which fails to reach the prescribed tolerance within the maximum number of iterations, both Adaptive GPB variants converge in all tested cases. For $n=200$, GPB 2C attains the fastest running times; however, the execution times of both Adaptive GPB variants are close and remain highly competitive when compared to the other methods.
For $n=50$, Adaptive GBP OneCut variant presents the best result.

\section{Conclusion}

We proposed an adaptive extension of the Generic Proximal bundle method from \cite{liang2023unified} called Adaptive GPB where step sizes
vary (are remained constant for the next cycle, increase, or decrease) along the iterations of the method depending on the outputs of a subroutine.
We have shown the complexity of the method.
Adaptive GPB 
relies on a blackbox BU which can be implemented
in many different ways. We propose
to use the Adaptive GPB OneCut and TwoCuts variants
which have the advantage of yielding subproblems \eqref{def:xj} which are solved extremely fast when $h=0$ or when
$h$ is the indicator of a box-constrained set.
We have shown the superiority of these two variants compared
to two other bundle methods from the literature on six nondifferentiable optimization problems from the literature. For static (with fixed step size $\lambda$) GPB, an extension 
based on a OneCut blackbox for stochastic problems
was proposed and studied in \cite{bundlegpbstochastic}. It would be
interesting to also extend Adaptive GPB for stochastic problems. 

\section*{Data availability}
There is no data related to this publication.


\bibliographystyle{plain}
\bibliography{last_versions/biblio}

\section*{Appendix A. Technical results}\label{appAsecB}

 We start recalling two results from \cite{guigues2024universal} and \cite{liang2023unified}. Lemma \ref{lemmaa6_glm24} is 
a corollary of
Lemma A.6 in \cite{guigues2024universal} 
with $\chi=0$
and Lemma \ref{lemma4.7_lm24} below is 
Lemma 4.7 from \cite{liang2023unified}.

\begin{lemma} \label{lemmaa6_glm24}
Let $j$ be an iteration of a cycle with step size $\lambda$ and first iteration $i$. Then
\begin{equation}
t_j-\frac{\bar{\varepsilon}}{4} \leq \tau^{j-i}\left(t_i-\frac{\bar{\varepsilon}}{4}\right)
\end{equation}
where $\tau$ is given by
\begin{equation}
\tau \geq \frac{u_\lambda}{1+u_\lambda}, \text{ with } u_\lambda:=\frac{4 \lambda\left[2 \overline M_f^2+ (\bar{\varepsilon}/2) \overline L_f\right]}{ \bar{\varepsilon}/2}.
\end{equation}\end{lemma}

In what follows, we denote by $j_k+1$ the first iteration of cycle $k$.

\begin{lemma}\label{lemma4.7_lm24}
Consider a given cycle $k$ with stepsize $\lambda$.
Then we have $t_{j_k+1} \leq \bar{t}$ where 
{\small
\begin{equation}
\bar{t}:=\overline{M}_f^2+4\left(\overline{L}_f+2\right)\left(\max \left\{1,2 \lambda \overline{L}_f\right\} d_0+\lambda \overline{M}_f\right)^2 .
\end{equation}}
\end{lemma}

\par {\textbf{Proof of Lemma \ref{lemk2b}.}} (a) 
Denoting $\varepsilon=\bar \varepsilon/2$, it follows from Lemma \ref{lemmaa6_glm24} (see the Appendix) that
as long as
\begin{equation}\label{deftauk}
\tau \geq  1- \left(1+ \frac{4\lam(2 {\overline M}_f^2 + \varepsilon {\overline L}_f)  }{\varepsilon}\right)^{-1},
\end{equation}
we have
\begin{equation}\label{ineq:tj-recur}
         t_{j+1}-\frac{\varepsilon}2 \le \tau \left(t_j -\frac{\varepsilon}2\right)
     \end{equation}
     for all inner iteration $j$ of the cycle.
Moreover, using Lemma \ref{lemma4.7_lm24} (see the Appendix), if $i$ is the first iteration of the cycle then
\begin{equation}\label{upptbar}
\begin{array}{lcl}
t_i & \leq &  
{\overline M}_f^2 +
4(2+{\overline L}_f)(d_0\max(1,2 \lambda {\overline L}_f)+\lambda\,{\overline M}_f)^2 \\
& \leq &  
\bar t := {\overline M}_f^2 +
4(2+{\overline L}_f)(d_0\max(1,2{\overline \lambda}\,{\overline L}_f)+{\overline \lambda}\,{\overline M}_f)^2,
\end{array}
\end{equation}
where we have used $\lambda \leq \overline \lambda$.     
When $\lambda \leq \lambda_*$, relation \eqref{deftauk}
is satisfied and once
we enter the subroutine with such $\lambda$ (satisfying \eqref{deftauk}) we end the corresponding cycle 
in at most
$k_1$ iterations, i.e., 
for $k \geq k_1$ we have 
\begin{equation}\label{successkbar}
t_{k+i-1} \leq {\bar \varepsilon}/2.
\end{equation}
Indeed, relation \eqref{successkbar} comes
from the relations
$$
\begin{array}{lcl}
t_{k+i-1} &\leq &{\bar \varepsilon}/4 + \tau^{{k}-1}(t_{i}-{\bar \varepsilon}/4) \mbox{ by }\eqref{ineq:tj-recur},\\
&\leq &{\bar \varepsilon}/4 + \tau^{{k}-1}({\bar t}-{\bar \varepsilon}/4) \mbox{ using }\eqref{upptbar},\\
&\leq &{\bar \varepsilon}/4 + \tau^{{k_1}-1}({\bar t}-{\bar \varepsilon}/4) \mbox{ for }k \geq k_1,\\
&\leq &{\bar \varepsilon}/4 + \tau^{k_0}({\bar t}-{\bar \varepsilon}/4),\\
&\leq &{\bar \varepsilon}/2
\mbox{ using }
\eqref{defk1},
\end{array}
$$
for $k \geq k_1$.
\par (b) The relation
$\lambda \geq {\underline \lambda}$ comes from (a) and the fact that
$\lambda$ is halved whenever it is decreased. 
To get a maximal number of bad cycles
 we need to start from a cycle starting with $\overline \lambda$.
Halving   $k$ times
$\lambda$
we get $\overline \lambda/2^{k}$
which needs to be $\geq {\underline \lambda}$, from which we deduce the upper bound $k_2$ on the number
of consecutive bad cycles.











\end{document}